\newtheorem{theorem}{Theorem}
\newtheorem{thm}[theorem] {Theorem}
\newtheorem{lemma}[theorem]{Lemma}
\newtheorem{claim}           [theorem] {Claim}
\newtheorem{conj}[theorem]{Conjecture}
\newcounter{caseq}[ruecksetzer]
\theoremstyle{remark}
\newtheorem{case}     [caseq]       {Case}
\let\eps=\varepsilon
\let\theta=\vartheta
\let\rho=\varrho
\let\sigma=\varsigma
\let\polishlcross=\l
\def\l{\ifmmode\ell\else\polishlcross\fi}
\newcommand{\vect}[1]{\mbox{\boldmath$#1$}}
\newcommand{\ex}{\mathrm{ex}}
\newcommand{\cycle}[3]{C^{(#1,#2)}_#3}
\newcommand{\hpath}[3]{P^{(#1,#2)}_{#3}}
\newcommand{\vecsx}{\boldsymbol{x}_{k-1}}
\newcommand{\vecsy}{\boldsymbol{y}_{k-1}}
\newcommand{\cA}{\mathcal{A}}
\newcommand{\cC}{\mathcal{C}}
\newcommand{\EE}{\ensuremath{\mathbb{E}}}
\begin{document}
\title[On Extremal Hypergraphs for Hamiltonian Cycles]{On Extremal Hypergraphs for Hamiltonian Cycles}

\author[R.~Glebov]{Roman Glebov}
\address{Institut f\"{u}r Mathematik, Freie Universit\"at Berlin, Arnimallee 3-5, D-14195 Berlin, Germany}
\email{glebov\,\textbar{}\,person@math.fu-berlin.de}
\thanks{The first author was supported by DFG within the research training group "Methods for Discrete Structures".}
\author[Y.~Person]{Yury Person}
\thanks{The second author was supported by GIF grant no.~I-889-182.6/2005.}
\author[W.~Weps]{Wilma Weps}

\date{\today}
\begin{abstract}
We study sufficient conditions for Hamiltonian cycles in hypergraphs, and obtain both Tur\'an- and Dirac-type results.
While the Tur\'an-type result gives an exact threshold for the appearance of a Hamiltonian cycle in a hypergraph depending only on the extremal number of a certain path, the Dirac-type result yields a sufficient condition relying solely on the minimum vertex degree.
\end{abstract}

\maketitle

\setcounter{footnote}{0}
\renewcommand{\thefootnote}{\fnsymbol{footnote}}

\section{Introduction and Results}

\subsection{ Tur\'an-type Results}

For a fixed graph $G$ and an integer $n$ the \emph{extremal number} $\ex \left( n, G \right)$ of $G$ is the largest integer $m$ such that there exists a graph on $n$ vertices with $m$ edges that does not contain a subgraph isomorphic to $G$. The corresponding graphs are called \emph{extremal graphs}. Naturally, one can extend this definition to a forbidden spanning structure, e.g.\ a Hamiltonian cycle (for definition see e.g.\ \cite{Bollobas}). In \cite{Ore} Ore proved that a non-Hamiltonian graph on $n$ vertices has at most $\binom{n-1}{2} + 1$ edges, and further, that the unique extremal example is given by an $(n-1)$-clique and a vertex of degree one that is adjacent to one vertex of the clique.

A \emph{$k$-uniform hypergraph $H$}, or $k$-graph for short, is a pair $(V,E)$ with a vertex set $V=V(H)$ and
an edge set $E = E(H) \subseteq \binom{V}{k}$. 
Since in this paper we always deal with $k$-graphs, and the usual $2$-uniform graphs have no special meaning for us, 
we also might use the siplified term {\em graph} for $k$-graphs.

There are several definitions of Hamiltonian cycles in hypergraphs, e.g.\ Berge Hamiltonian cycles \cite{Bermond}. This paper yet follows the definition of Hamiltonian cycles established by Katona and Kierstead \cite{KK} as it has become more and more popular in research.

An \emph{$l$-tight Hamiltonian cycle} in $H$, $0 \leq l \leq k-1$, $(k-l)\big||V(H)|$, is a spanning sub-$k$-graph whose vertices can be cyclically ordered in such a way that the edges are segments of that ordering and every two consecutive edges intersect in exactly $l$ vertices. 
More formally, it is a graph isomorphic to $([n],E)$ with \[E=\left\{\{i(k-l)+1, i(k-l)+2, \ldots, i(k-l)+k\}:~0\leq i<\frac{n}{k-l}\right\},\]
where addition is made modulo $n$.
We denote an $l$-tight Hamiltonian cycle in a $k$-graph $H$ on $n$ vertices by $\cycle{k}{l}{n}$, and call it $tight$ if it is $(k-1)$-tight.

Working on her thesis~\cite{Dana} in coding theory, Woitas raised the question whether removing $\binom{n-1}{2}-1$ edges from a complete $3$-uniform hypergraph on $n$ vertices leaves a hypergraph containing a $1$-tight Hamiltonian cycle. A generalization of this problem is to estimate the extremal number of  Hamiltonian cycles in $k$-graphs.

Katona and Kierstead were the first to study sufficient conditions for the appearance of a $\cycle{k}{k-1}{n}$ in $k$-graphs.
In \cite{KK} they showed that for all integers $k$ and $n$ with $2 \leq k$ and $2k-1 \leq n$,
\[ \ex(n,\cycle{k}{k-1}{n}) \geq \binom{n-1}{k} + \binom{n-2}{k-2}.\]
In the same paper Katona and Kierstead proved, that this bound is not tight for $k=3$ by showing that for all integers $n$ and $q$ with $q \geq 2$ and $n = 3q+1$,
\[ \ex(n,\cycle{3}{2}{n}) \geq \binom{n-1}{3} + n - 1.\]
In \cite{Tuza} Tuza gave a construction for general $k$ and tight Hamiltonian cycles, improving the lower bound to
\[ \ex(n,\cycle{k}{k-1}{n}) \geq \binom{n-1}{k} + \binom{n-1}{k-2},\]
if a Steiner system $S(k-2,2k-3,n-1)$ exists. Also for all $k,n$ and $p$ such that a partial Steiner system $PS(k-2,2k-3,n-1)$ of order $n-1$ with $p \binom{n-1}{k-2} / \binom{2k-3}{k-2}$
blocks exists, Tuza proved the bound
\[ \ex(n,\cycle{k}{k-1}{n}) \geq \binom{n-1}{k} + p \binom{n-1}{k-2}.\]

An intuitive approach to forbid Hamiltonian cycles in hypergraphs is to prohibit certain structures in the \emph{link} of one fixed vertex.
For a vertex $v \in V$, we define the \emph{link of $v$ in $H$} to be the $ \left(k-1 \right)$-graph $H(v)= \left(V\backslash \{v\},E_v \right)$ with $\{x_1, \ldots, x_{k-1}\} \in E_v$ iff $\{v, x_1, \ldots, x_{k-1}\} \in E(H)$.

The structure of interest in this case is a generalization of a path for hypergraphs.

An {\em $l$-tight $k$-uniform $t$-path}, denoted by $\hpath{k}{l}{t}$, is a $k$-graph on $t$ vertices, $(k~-~l)~|~(t~-~l)$, such that
there exists an ordering of the vertices, say $(x_1, \ldots, x_t)$,  in such a way that the edges are segments of that ordering
and every two consecutive edges intersect in exactly $l$ vertices.
 Observe that a $\hpath{k}{l}{t}$ has $\frac{t-l}{k-l}$ edges.
A $k$-uniform $(k-1)$-tight path is called \emph{tight}, and whenever we consider a path we assume it to be tight unless stated otherwise.

For arbitrary $k$ and $l$ we give the exact extremal number and the extremal graphs of $l$-tight Hamiltonian cycles in this paper. 
The extremal number and the extremal graphs rely on the extremal number of $P(k,l) := \hpath{k-1}{l-1}{\left \lfloor \frac{k}{k-l} \right \rfloor (k-l) + l -1}$, 
and its extremal graphs, respectively.

\begin{thm}
\label{l-tight}
For any $k \geq 2, l \in \{0, \ldots, k-1\}$ there exists an $n_0$ such that for any $n \geq n_0$ and $( k-l ) | n$,
\[ \ex \left(n, \cycle{k}{l}{n} \right) = \binom{n-1}{k} + \ex \left (n-1, P(k,l) \right)\]
holds.
Furthermore, any extremal graph on $n$ vertices contains an $(n-1)$-clique and a vertex whose link is $P(k,l)$-free.
\end{thm}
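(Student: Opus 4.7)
For the lower bound, my plan is to exhibit an extremal $H^\star$ explicitly: on a vertex set $V = U \cup \{v\}$ with $|U| = n-1$, let $H^\star$ consist of all $k$-subsets of $U$ together with all sets $\{v\} \cup e$, where $e$ ranges over the edges of a maximum $P(k,l)$-free $(k-1)$-graph $F$ on $U$. Then $|E(H^\star)| = \binom{n-1}{k} + \ex(n-1, P(k,l))$. To show $H^\star$ has no $l$-tight Hamilton cycle, I would argue that in any such cycle, $v$ occupies one cyclic position and hence lies in at least $m := \lfloor k/(k-l)\rfloor$ consecutive edges; deleting $v$ from these edges produces an $(l-1)$-tight $(k-1)$-uniform path on $m(k-l) + l - 1$ vertices -- a copy of $P(k,l)$ -- inside $F$, contradicting the choice of $F$.

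For the matching upper bound and the extremal-graph characterization, I would suppose $H$ is an $n$-vertex $k$-graph with no $l$-tight Hamilton cycle and $|E(H)| \geq \binom{n-1}{k} + \ex(n-1, P(k,l))$. The pivotal counting is
\[
|E_v| = |E(H)| - |E(H - v)| \geq |E(H)| - \binom{n-1}{k} \geq \ex(n-1, P(k,l))
\]
for every $v \in V(H)$, since $H - v$ has at most $\binom{n-1}{k}$ edges. I would then split on whether this inequality is ever an equality. If some $v^\star$ realizes equality, then $|E(H)|$ is exactly the extremal bound and $H - v^\star$ is the complete $k$-graph on $V \setminus \{v^\star\}$; I claim $H(v^\star)$ must then be $P(k,l)$-free, for otherwise one can adjoin $v^\star$ to every edge of a copy of $P(k,l)$ in $H(v^\star)$ to produce an $l$-tight $k$-uniform path $P^\star$ with $v^\star$ at a central position of its natural vertex order, and complete $P^\star$ to an $l$-tight Hamilton cycle by placing the remaining vertices of $V \setminus \{v^\star\}$ in any cyclic order around $P^\star$ (all edges of this cycle that avoid $v^\star$ lie in the complete $k$-graph on $V \setminus \{v^\star\}$). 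This identifies $H$ with the advertised extremal structure.

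The remaining case, in which every $v$ satisfies $|E_v| > \ex(n-1, P(k,l))$ (so $|E(H)|$ strictly exceeds the extremal bound and every link already contains a copy of $P(k,l)$), is the main technical obstacle: here I must actually produce an $l$-tight Hamilton cycle to contradict non-Hamiltonicity. My plan is to invoke the Dirac-type minimum-degree theorem of the present paper whenever $\delta(H)$ exceeds its threshold, and otherwise to absorb each low-degree vertex $v_0$ individually, using the guaranteed copy of $P(k,l)$ in $H(v_0)$ together with the near-completeness of $H - v_0$ forced by the edge bound, so that one can build an $l$-tight Hamilton cycle on $V \setminus \{v_0\}$ via Dirac and splice $v_0$ in via its $P(k,l)$-structure. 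The delicate point is that $\ex(n-1, P(k,l)) = O(n^{k-2})$ lies far below the Dirac vertex-degree threshold of order $n^{k-1}$, so several vertices may sit in the intermediate regime and require separate absorption.
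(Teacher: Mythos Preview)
Your lower bound construction and the case split for the upper bound are correct and mirror the paper's logic. In particular, your treatment of the equality case (some $v^\star$ with $|E_{v^\star}|=\ex(n-1,P(k,l))$, forcing $H-v^\star$ to be complete) is fine: since $H-v^\star$ is the complete $(n-1)$-clique, you really can place the $P(k,l)$-vertices around $v^\star$ and fill in the rest of the cyclic order arbitrarily.

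The genuine gap is in your ``main technical obstacle'' case. Your plan---apply Theorem~\ref{delta1} to $H$ or to $H-v_0$ as a black box and then splice each low-degree $v_0$ in via its copy of $P(k,l)$---does not work as stated. First, $(k-l)\mid n$ does not give $(k-l)\mid(n-1)$, so there is in general no $l$-tight Hamilton cycle on $V\setminus\{v_0\}$; and even a tight Hamilton cycle on $V\setminus\{v_0\}$ furnished by Theorem~\ref{delta1} gives you no control over where the vertices of your chosen $P(k,l)$ sit in the cyclic order, so you cannot splice $v_0$ in at the required position. Second, you have not bounded the number of low-degree vertices; without this, iterated splicing has no chance.

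The paper resolves both issues by \emph{not} treating Theorem~\ref{delta1} as a black box. It first strips off minimum-degree vertices $v_1,\dots,v_t$ until the remainder $H'$ satisfies $\delta_1(H')\ge(1-\eps)\binom{n'-1}{k-1}$; a simple non-edge count gives $t\le 2/\eps=O(1)$. Then Claim~\ref{match} produces, \emph{in advance}, short vertex-disjoint good paths through each $v_i$: when $t=1$ and $\deg(v_1)$ is tiny, one uses the guaranteed $P(k,l)\subseteq H(v_1)$ together with a double-count showing that most edges of $H(v_1)$ avoid all ``bad'' tuples of $H'$; when the $v_i$ have moderate degree one finds tight paths instead. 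Only then are the absorbing and connecting Lemmas~\ref{abs}--\ref{extend} applied to $H'$ to build a cycle that threads through these pre-selected paths and absorbs the leftovers. Fixing the $v_i$-paths before constructing the cycle is exactly the control your splicing approach lacks.
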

Notice, that $P(k,l)$ contains $\left\lfloor \frac{k}{k-l} \right\rfloor$  hyperedges.

For $k=3$ and $l=1$ Theorem~\ref{l-tight} answers the aforementioned question of Woitas~\cite{Dana} that indeed $\binom{n-1}{3}+1$ hyperedges ensure an
existence of a $1$-tight Hamiltionian cycle $\cycle{3}{1}{n}$ for $n$ large enough.

For $k=3$ and $l=2$ Theorem~\ref{l-tight} states that there exists an $n_0$ such that for any $n \geq n_0$,
\begin{align*}
\ex \left(n, \cycle{3}{2}{n} \right) & =  \binom{n-1}{3} + \ex \left (n-1, \hpath{2}{1}{4} \right) \\
 & = \begin{cases}
\binom{n-1}{ 3} + n-1, & 3 \, | \, n-1 \\
\binom{n-1}{3} + n-2 , & \text{otherwise.}
\end{cases}
\end{align*}

Note that this not only goes along with Katona and Kierstead's remark, but further specifies it for the special case $k=3$.\\

Actually, in this paper we prove a stronger statement, namely that with one
more edge we find a Hamiltonian cycle that is $l$-tight in the neighborhood of one
vertex and is $ \left(k-1 \right)$-tight on the rest.

Using the result by Gy\"ori, Katona, and Lemons \cite{GKL} stating that
\[(1 + o(1)) \binom{n-1}{k-2} \leq  \ex \left (n-1,\hpath{k-1}{k-2}{2k-2} \right) \leq (k-1) \binom{n-1}{k-2},\]
we obtain lower and upper bounds for $l=k-1$:
\[\binom{n-1}{k} + (1 + o(1)) \binom{n-1}{k-2} \leq  \ex \left (n,\cycle{k}{l}{n} \right)  \leq \binom{n-1}{k} + (k-1) \binom{n-1}{k-2}.\]
Note that the upper bound also holds for $l \neq k-1$.

In our proof we make use of the \emph{absorbing technique} that was originally developed by  R\"odl, Ruci\'nski and Szemer\'edi.

\subsection{Dirac-type Results}

The problem of finding Hamiltonian cycles and perfect matchings in $2$-graphs has been studied very intensively. There are plenty beautiful conditions guaranteeing the existence of such cycles, e.g.\ Dirac's condition \cite{Dir}.

Over the last couple of years several Dirac-type results in hypergraphs were shown, and along with them, different definitions of \emph{degree} in a $k$-graph were introduced.  They all can be captured by the following definition.
The \emph{degree} of $\{ x_1, \ldots, x_{i}\}$, $1 \leq i \leq k-1$, in a $k$-graph $H$ is the number of edges the set is contained in and is denoted by $\deg( x_1, \ldots, x_{i})$.
Let
\[ \delta_d (H) := \min \{ \deg (x_1, \ldots, x_d) | \{ x_1, \ldots, x_d \} \subset V(H) \} \]
for $0 \leq d \leq k-1$. If the graph is clear from the context, we omit $H$ and write for short $\delta_d$. Note that $\delta_0 = e(H) := |E(H)|$ and $\delta_1$ is the minimum vertex degree in $H$.

Following the definitions of  R\"odl and Ruci\'nski in \cite{Survey}, denote for every $d,k,l$ and $n$ with $0 \leq d \leq k-1$ and $(k-l) | n$ the number $h^l_d(k,n)$ to be the smallest integer $h$ such that every $n$-vertex $k$-graph $H$ satisfying $\delta_d(H) \geq h$ contains an $l$-tight Hamiltonian cycle.
Observe that $h^l_0 (k,n) = \ex \left(n, \cycle{k}{l}{n} \right) + 1$.

In \cite{KK} Katona and Kierstead showed that $h^{k-1}_{k-1}(k,n) \geq \left \lfloor \frac{n-k+3}{2} \right \rfloor$ by giving an extremal construction. Their implicit conjecture that this bound is tight was confirmed for $k=3$ by R\"odl, Ruci\'nski and Szemer\'edi in \cite{RRSdirac} asymptotically and in \cite{RRScond} exactly.
For $k \geq 4$ the same authors showed in \cite{RRSapprox} that $ h^{k-1}_{k-1}(k,n)\sim \frac{1}{2}n$.
Generalizing the results to other tightnesses, Markstr\"om and Ruci\'nski proved in \cite{MR} that $ h^{l}_{k-1}(k,n)\sim \frac{1}{2}n$ if $(k-l) | k,n$.
In \cite{KMO} K\"uhn, Mycroft and Osthus proved that
\[ h^{l}_{k-1}(k,n)\sim \frac{n}{\left \lceil \frac{k}{k-l} \right \rceil (k-l)} \]
if $k-l$ does not divide $k$ and $(k-l)|n$, proving a conjecture by H\`an and Schacht \cite{HS}.
For further information, an excellent survey of the recent results can be found in \cite{Survey}.

R\"odl and Ruci\'nski conjectured in \cite{Survey} that for all $1 \leq d \leq k-1$, $k | n$,
\[ h^{k-1}_d (k,n) \sim h^0_d (k,n).\]
Further notice that $0$-tight Hamiltonian cycles $\cycle{k}{0}{n}$ are perfect matchings covering all vertices. A perfect matching may be considered the ``simplest''
 spanning structure and  there are several results about $ h^0_{d} (k,n)$, see for example~\cite{RRSpm, HPS, KuhOstTre}.

Noting the fact that there are virtually no results on $h^l_d(k,n)$ for $d \leq k-2$, R\"odl and Ruci\'nski remarked in \cite{Survey} that it does not even seem completely trivial to show $h^2_1(3,n) \leq c \binom{n-1}{2}$ for some constant $c < 1$. Further, they gave the following bounds
\[ \left( \frac{5}{9} + o(1) \right) \binom{n-1}{2} \leq h^2_1(3,n) \leq \left( \frac{11}{12} + o(1) \right) \binom{n-1}{2}.\]

We show the following upper bound on $h^{k-1}_1 (k,n)$.

\begin{thm}
\label{delta1}
For any $k \in \mathbb{N}$ there exists an $n_0$ such that every $k$-graph $H$ on $n \geq n_0$ vertices with $\delta_1 \geq \left( 1- \frac{1}{22 \left( 1280 k^3\right)^{k-1}} \right) \binom{n-1}{k-1}$ contains a tight Hamiltonian cycle.
\end{thm}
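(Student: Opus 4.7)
The strategy is the absorbing method of R\"odl, Ruci\'nski and Szemer\'edi, exploiting that the hypothesis leaves $H$ extremely close to the complete $k$-graph $K^{(k)}_n$. Writing $c_k:=\frac{1}{22(1280k^3)^{k-1}}$, every vertex misses at most $c_k\binom{n-1}{k-1}$ edges, and a double counting argument shows that the number of \emph{bad} ordered $(k-1)$-tuples---those of codegree less than $(1-\sqrt{c_k})n$---is only $O(\sqrt{c_k})\binom{n}{k-1}$. Hence almost every $(k-1)$-tuple is \emph{good}, i.e.\ extends to an edge in $(1-O(\sqrt{c_k}))n$ ways.

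The first ingredient is a \emph{connecting lemma}: any two disjoint good $(k-1)$-tuples can be joined by a tight $k$-path of bounded length avoiding any prescribed sublinear vertex set. This is proved by extending greedily one vertex at a time, since each good tuple has $(1-O(\sqrt{c_k}))n$ valid extensions and bad tuples form a vanishing fraction, so constantly many steps suffice to merge the two ends.

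The second ingredient is an \emph{absorbing path} $P_A$ built via random sampling and the deletion method. For every $v\in V(H)$, define an absorber of $v$ to be a constant-size tight $k$-path into which $v$ can be inserted so that the result is again a tight path; a direct counting using the high minimum degree shows each vertex has $\Omega(n^{2k-1})$ absorbers. A random subfamily yields pairwise disjoint absorbers covering every vertex many times, and the connecting lemma links them into a single tight path $P_A$ on $o(n)$ vertices. A small random \emph{reservoir} $R$, disjoint from $V(P_A)$, is also set aside so that every good tuple retains nearly $|R|$ extensions inside $R$.

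Finally, in the subhypergraph $H':=H[V\setminus(V(P_A)\cup R)]$ the minimum degree is still close to $\binom{|V(H')|-1}{k-1}$, so either Theorem~\ref{l-tight} applied to $H'$ (with $l=k-1$) or a greedy extension from a good tuple produces a tight path covering all but a sublinear leftover set $L$. The connecting lemma through $R$ splices this long path with $P_A$ into a tight cycle missing only $L$, and the absorbing property of $P_A$ then swallows each vertex of $L$ to yield the required tight Hamiltonian cycle. \textbf{The main obstacle} is that a $\delta_1$ hypothesis alone does not prevent individual $(k-1)$-tuples from having tiny codegree, so every greedy extension, every absorber search, and every splice must simultaneously dodge the bad tuples. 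Controlling the bad fraction $\sqrt{c_k}$ through the $O_k(1)$ nested extensions, deletions and reservoir restrictions without it blowing up is precisely what forces the explicit constant $\frac{1}{22(1280k^3)^{k-1}}$ in the hypothesis.
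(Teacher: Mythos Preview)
Your overall architecture---absorbing path, reservoir/connectors, near-spanning greedy path, close up and absorb the leftover---is exactly the paper's. The one substantive gap is in your notion of ``good''. You declare an ordered $(k-1)$-tuple good when its codegree exceeds $(1-\sqrt{c_k})n$, and then argue that greedy one-vertex extension succeeds because ``bad tuples form a vanishing fraction''. But that fraction is vanishing only \emph{globally}: when you extend a good end $(x_1,\ldots,x_{k-1})$ by a new vertex $v$, the new end is $(x_2,\ldots,x_{k-1},v)$, and whether this is good depends on the fixed $(k-2)$-tuple $(x_2,\ldots,x_{k-1})$, about which your single-threshold definition says nothing. It is entirely possible that $(x_1,\ldots,x_{k-1})$ has near-maximal codegree while $(x_2,\ldots,x_{k-1},v)$ is bad for almost every $v$. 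The paper repairs this with a \emph{hierarchical} definition: $(x_1,\ldots,x_{k-1})$ is good only if $\deg(x_1,\ldots,x_i)\ge(1-\rho^{k-i})\binom{n-i}{k-i}$ for \emph{every} $1\le i\le k-1$, where $\rho=(22c_k)^{1/(k-1)}$. This geometric ladder of thresholds is what allows Lemma~\ref{extend} to show that all but $\rho n$ choices of $v$ keep the new end good at every level simultaneously, and the $k-1$ levels of nesting are precisely what force the exponent $k-1$ in $c_k$. Your final paragraph correctly flags this as the obstacle, but the cure you sketch (a single $\sqrt{c_k}$) is not fine enough to survive even one extension step.

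Two smaller points. Invoking Theorem~\ref{l-tight} on $H'$ is both a type mismatch (that theorem is an edge-count statement, not a $\delta_1$ statement) and circular (its proof rests on the same lemmas); stick with the greedy extension you also mention. And the paper does not use a random vertex reservoir $R$ but rather a pre-sampled family $\mathcal{C}$ of $(k-1)$-tuples (Lemma~\ref{con}) each of which provably connects any pair of good ends; functionally this plays the same role as your reservoir, but it turns the final splice into a single deterministic step.
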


Note that Theorem~\ref{delta1} implies
\[ h^l_d (k,n) \leq \left( 1- \frac{1}{22 \left( 1280 k^3\right)^{k-1}} \right) \binom{n-d}{k-d} \]
for all $l \in \{ 0, \ldots, k-1 \}$ and all $1 \leq d \leq k-1$. This shows that there exists a constant $c < 1$ such that for all $l,d$
\[ h^l_d (k,n) \leq c \binom{n-d}{k-d} \]
holds, although this constant is clearly far from being optimal.

\section{Proofs}

\subsection{Outline of the Proofs}

In the following we give a brief overview over the structure of the proofs of Theorems~\ref{l-tight} and~\ref{delta1}.
For this section we define for every $k \in \mathbb{N}$
\[ \eps = \frac{1}{22 (1280k^3)^{k-1}} \]
and
\[ \rho = \left(22 \eps \right)^{\frac{1}{k-1}}. \]

Suppose $H=(V,E)$ is a $k$-graph on $n$ vertices with $\delta_1 \geq (1-\eps)\binom{n-1}{k-1}$ and $n$ sufficiently large.
By an $end$ of a path $\hpath{k}{l}{t}$ we mean the tuple consisting of its first $k-1$ vertices, $\left(x_1, \ldots, x_{k-1}\right)$, or the tuple consisting of its last $k-1$ vertices in reverse order, $\left(x_t, \ldots, x_{t-k+2} \right)$, considering the ordered vertices. For an $i$-tuple $(x_1, \ldots, x_i)$ in $H$ we write \vect{x_i}, $1 \leq i \leq n$.
We call $\vect{x_{k-1}}$ \emph{good} if all $x_i$s are pairwise distinct and for all $i \in \{1, \ldots, k-1\}$ it holds that
\begin{equation}\label{eq:good_tupl_cond}
 \deg (x_1, \ldots, x_{i} ) \geq \left(1 - \rho^{k-i} \right) \binom{n-i}{k-i}.
\end{equation}
A path is called \emph{good} if both of its ends are good.\\

\textbf{Outline of the proofs and some definitions:}

\begin{enumerate}
\item At first, we prove the existence of one $l$-tight good path
or several vertex-disjoint good tight paths containing the vertices of small degree, see Claim~\ref{match}. (Note that we do not need this step in the proof of Theorem~\ref{delta1}.)
\item We say that a tuple $\vect{x_{2k-2}}$ \emph{absorbs} a vertex $v\in V$ if both $\vect{x_{2k-2}}$ and
$ \left(x_1,\ldots, x_{k-1}, v, x_k, \ldots, x_{2k-2} \right)$ induce good paths in $H$, meaning that the corresponding ordering of the paths is $\vect{x_{2k-2}}$ or $ \left(x_1,\ldots, x_{k-1}, v, x_k, \ldots, x_{2k-2} \right)$, respectively, and the ends are good. Lemma~\ref{abs} ensures a set $\mathcal{A}$,
such that any remaining vertex can be absorbed by many tuples of $\mathcal{A}$. We call an element of $\mathcal{A}$ an \emph{absorber}.
\item For $\vect{x_i},\vect{y_j} \in V^{k-1}$ we define
\[ \vect{x_i} \Diamond \vect{y_j} := (x_1, \ldots, x_i, y_1, \ldots, y_j).\]
Let $\vect{x_{k-1}}$ and $\vect{y_{k-1}}$ be good. We say that a tuple $\vect{z_{k-1}}$ \emph{connects} $\vect{x_{k-1}}$ with $\vect{y_{k-1}}$ if $\left ( x_{k-1}, \ldots, x_1 \right) \Diamond \vect{z_{k-1}} \Diamond \vect{y_{k-1}}$ induces a path in $H$ with respect to the order. Notice that the connecting-operation is not symmetric. Lemma~\ref{con} guarantees a set $\mathcal{C}$ such that any pair of $(k-1)$-tuples in $H$ can be connected by many elements of $\mathcal{C}$. We call the elements of $\mathcal{C}$ \emph{connectors}.
\item We modify $\mathcal{A}$ and $\mathcal{C}$ such that $\mathcal{A}$, $\mathcal{C}$ and the element(s) of Step $1$ are pairwise vertex-disjoint.
\item In Lemma~\ref{onepath} we create a good tight path that contains all elements of the modified $\mathcal{A}$, respecting their ordering.
\item Using Lemma~\ref{extend}, we extend the path from Step $5$ until it covers almost all of the remaining vertices that neither participate in ($l$-tight or tight) good paths of Step $1$ nor in the modified $\mathcal{C}$.
\item Using connectors, we create a cycle containing the
($l$-tight or tight) good paths from Step $1$ and the good path from Step $6$.
\item In the final step all remaining vertices are absorbed by the absorbers in the cycle.
\end{enumerate}

\subsection{Auxiliary Lemmas}
In this part we derive the main tools used to prove Theorems~\ref{l-tight} and~\ref{delta1}.
For this subsection let $H=(V,E)$ be a $k$-graph on $n$ vertices with
\begin{equation}\label{eq:mindegree}
\delta_1 \geq (1-\eps)\binom{n-1}{k-1}.
\end{equation}
Recall $\eps = \frac{1}{22 (1280k^3)^{k-1}}$ and $n$ sufficiently large.\\

The following lemma provides us with an essential tool which we use to prove other statements in this subsection.

\begin{lemma}
\label{randomgood}
Let $\vect{x_{2k-2}}$ be chosen u.a.r.\ from $V^{2k-2}$. The probability that all $x_i$s are pairwise distinct and both $\left( x_1, \ldots, x_{k-1}\right)$ and $\left( x_{2k-2}, \ldots, x_{k}\right)$ are good is at least $ \frac{8}{11}$.
\end{lemma}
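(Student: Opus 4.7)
The plan is a straightforward union bound. The failure event for $\vecx$ factors into (a) two of the $2k-2$ coordinates coinciding, which costs at most $\binom{2k-2}{2}/n = o(1)$ and is harmless, and (b) some initial prefix $(x_1,\ldots,x_i)$ of one of the two ends $(x_1,\ldots,x_{k-1})$ or $(x_{2k-2},\ldots,x_k)$ violating the degree bound~\eqref{eq:good_tupl_cond}. Both ends are uniformly distributed over $V^{k-1}$, so everything reduces to bounding, for each $i\in\{1,\ldots,k-1\}$, the probability that a uniformly random ordered $i$-tuple from $V^i$ has $\deg(x_1,\ldots,x_i)<(1-\rho^{k-i})\binom{n-i}{k-i}$.

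For $i=1$ there is nothing to do: $\rho^{k-1}=22\eps\ge\eps$, so~\eqref{eq:mindegree} already implies~\eqref{eq:good_tupl_cond} at every vertex. For $i\ge 2$ I would first convert the minimum-degree hypothesis into a global edge count: summing~\eqref{eq:mindegree} over $V$ and dividing by $k$ gives $e(H)\ge(1-\eps)\binom{n}{k}$, hence the number of $k$-subsets of $V$ missing from $E$ (call them non-edges) is at most $\eps\binom{n}{k}$. Then I would count pairs $(S,f)$ with $S\subset f\subset V$, $|S|=i$, and $f$ a non-edge: summing over $f$ and using the identity $\binom{k}{i}\binom{n}{k}=\binom{n}{i}\binom{n-i}{k-i}$, the total is at most $\eps\binom{n}{i}\binom{n-i}{k-i}$, while each unordered $i$-set $S$ underlying an ordered bad tuple contributes more than $\rho^{k-i}\binom{n-i}{k-i}$ such pairs. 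Consequently there are at most $\eps\binom{n}{i}/\rho^{k-i}$ bad $i$-sets, hence at most $\eps n^i/\rho^{k-i}$ bad ordered $i$-tuples in $V^i$, and the bad probability at level $i$ is at most $\eps/\rho^{k-i}$.

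Summing over $i=2,\ldots,k-1$ and using $\rho^{k-1}=22\eps$ together with $\rho\le 1/(1280k^3)<1/2$ gives
\[
\sum_{i=2}^{k-1}\frac{\eps}{\rho^{k-i}}\;\le\;\frac{\eps}{\rho^{k-2}}\cdot\frac{1}{1-\rho}\;=\;\frac{\rho}{22(1-\rho)}\;\le\;\frac{\rho}{11}.
\]
Applying this bound once to each end and adding the $o(1)$ distinctness loss yields a total failure probability of at most $2\rho/11+o(1)$, which is comfortably below $3/11$ for $n$ large; the claimed $8/11$ follows. The main ``obstacle'' is really just the factorial/binomial bookkeeping in the double count; conceptually the argument is the standard averaging of the degree condition against the sparse set of non-edges, and the resulting estimate is quite loose, which presumably accounts for the rather forgiving constant $8/11$ in the statement.
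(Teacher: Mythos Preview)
Your proof is correct and follows essentially the same route as the paper's: bound the number of non-edges via~\eqref{eq:mindegree}, double-count pairs (bad $i$-tuple, non-edge containing it) to control the probability that a random prefix violates~\eqref{eq:good_tupl_cond}, sum over $i$ as a geometric series, and finish with a union bound over the two ends and the distinctness event. Your treatment is in fact slightly sharper (you observe that the $i=1$ level is automatic from $\rho^{k-1}=22\eps\ge\eps$, yielding a bound of $2\rho/11+o(1)$ rather than the paper's $2/11+o(1)$), but the argument is the same.
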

\begin{proof}
Let $a$ be the number of $(k-1)$-tuples that are not good and have $k-1$ distinct entries, i.e. that are taken from $V$ without repetition. 
Further, let $b_j$ be the number of $j$-tuples $\vect{y_j}$ with
$\deg(y_1, \ldots, y_j)~<~(1~-~\rho^{k-j})~\binom{n-j}{k-j}$, $j \in \{ 1, \ldots, k-1\}$, and all $y_j$s are again pairwise distinct. Thus, by the definition of a good tuple, for each tuple $\vect{y_{k-1}}$ that is not good and has pairwise distinct entries, there exists a $j$ such that $\vect{y_j}$ is one of the $b_{j}$ tuples with small degree.
Furthermore, for every $\vect{y_j}$ there are at most $\frac{(n-j)!}{(n-k+1)!}$ different $(k-1)$-tuples $\left( y_1, \ldots, y_j, z_1, \ldots, z_{k-1-j} \right)$ with pairwise distinct $z_j \in V \backslash \{y_1, \ldots, y_j\}$.
Hence,
\[a\leq \sum_{j=1}^{k-1}\frac{(n-j)!}{(n-k+1)!} b_j.\]

The second time we apply double counting, we recall that $H$ has at most $\eps \binom{n}{k}$ non-edges.
Each of the $b_j$ $j$-tuples is by definition in at least $\rho^{k-j} \binom{n-j}{k-j}$ non-edges,
and from every non-edge one obtains $\binom{k}{j} j!$ different $j$-tuples.
Thus,
\[\rho^{k-j} \binom{n-j}{k-j} b_j\leq \binom{k}{j} j! \eps \binom{n}{k}.\]

Putting the two bounds together, we obtain for a $vect{w_{k-1}}$ chosen u.a.r.\ from $V^{k-1}$
\begin{align*}
\Pr & \left[ \vect{w_{k-1}} \text{ is not good and has pairwise distinct entries} \right] =  \frac{a}{n^{k-1}} \\
& \leq \sum_{j=1}^{k-1}\frac{(n-j)!}{(n-k+1)!} b_j\frac{1}{n^{k-1}} \\
& \leq \sum_{j=1}^{k-1}\frac{(n-j)!}{(n-k+1)!} \frac{\binom{k}{j} j! \eps \binom{n}{k}}{\rho^{k-j} \binom{n-j}{k-j}}\frac{1}{n^{k-1}} \\
& \le\eps\sum^{k-1}_{i=1}{\frac{1}{\rho^{k-i}}}
< \frac{2 \eps}{\rho^{k-1}}
= \frac{1}{11}.
\end{align*}
Then for a $vect{x_{2k-2}}$ chosen u.a.r.\ from $V^{2k-2}$
\begin{align*}
 \Pr & [ \left( x_1, \ldots, x_{k-1} \right) \text{ and } \left( x_{2k-2}, \ldots, x_k \right) \text{ are good} ] \\
 & \geq  \frac{n(n-1) \ldots (n - 2k + 3)}{n^{2k-2}} - \frac{2}{11}
 \geq  1 - \frac{3}{11} = \frac{8}{11}.
\end{align*}
\end{proof}

For a given set $\mathcal{X}$ of tuples or graphs, we write $X$ when considering the corresponding vertex set.

\begin{lemma}
\label{abs}
For all $\gamma$, $0 < \gamma \leq \frac{1}{64k^2}$, there exists a set $\mathcal{A}$ of size at most $2 \gamma n$ consisting of disjoint $(2k-2)$-tuples, each inducing a good path with respect to its order, such that for each vertex $v\in V$ at least $\frac{\gamma n}{4}$ tuples in $\mathcal{A}$ absorb $v$.
\end{lemma}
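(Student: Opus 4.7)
The proof would follow the standard absorbing method: first lower-bound, for every vertex $v$, the number of $(2k-2)$-tuples that absorb it, and then sample a random family of candidate absorbers and delete overlaps so the survivors are pairwise vertex-disjoint.

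\textbf{Step 1 (many absorbers per vertex).} Fix $v \in V$ and let $A_v \subseteq V^{2k-2}$ be the set of $(2k-2)$-tuples that absorb $v$. I would show $|A_v| \geq \tfrac58 n^{2k-2}$ by bounding from below the probability that a uniform random $\vect{x_{2k-2}} \in V^{2k-2}$ lies in $A_v$. Such membership requires: (a) all entries are pairwise distinct and both ends $(x_1,\dots,x_{k-1})$, $(x_{2k-2},\dots,x_k)$ are good; (b) each of the $k-1$ consecutive $k$-windows of $\vect{x_{2k-2}}$ is an edge of $H$; (c) $v$ is disjoint from $\{x_1,\dots,x_{2k-2}\}$ and the $k$ further $k$-sets obtained by inserting $v$ between positions $k-1$ and $k$ are edges of $H$. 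Lemma~\ref{randomgood} gives $\Pr[(\text{a})] \geq 8/11$. Since $H$ has at most $\eps\binom{n}{k}$ non-edges and the link of $v$ has at most $\eps\binom{n-1}{k-1}$ non-edges by~\eqref{eq:mindegree}, each of the $2k-1$ window conditions in (b)--(c) fails with probability at most $\eps$, while $\Pr[v \in \{x_1,\dots,x_{2k-2}\}] \leq (2k-2)/n = o(1)$. Union-bounding,
\[
\Pr[\vect{x_{2k-2}} \in A_v] \;\geq\; \tfrac{8}{11} - (2k-1)\eps - o(1) \;\geq\; \tfrac58
\]
for $n$ large, yielding $|A_v| \geq \tfrac58 n^{2k-2}$.

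\textbf{Step 2 (random selection and cleanup).} Let $\mathcal{G} \subseteq V^{2k-2}$ denote the set of good-path tuples; trivially $A_v \subseteq \mathcal{G}$ for every $v$. Sample $\mathcal{F} \subseteq \mathcal{G}$ by including each tuple independently with probability $p := \gamma/n^{2k-3}$, so that $\EE|\mathcal{F}| \leq \gamma n$ and $\EE|\mathcal{F} \cap A_v| \geq \tfrac58 \gamma n$. Chernoff together with a union bound over the $n$ vertices gives, with probability tending to $1$, both $|\mathcal{F}| \leq 2\gamma n$ and $|\mathcal{F} \cap A_v| \geq \tfrac{5}{16}\gamma n$ for every $v$. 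The number of unordered pairs of tuples in $V^{2k-2}$ sharing a vertex is at most $\tfrac12 (2k-2)^2 n^{4k-5}$, so the expected number of such conflicting pairs in $\mathcal{F}$ is at most $\tfrac12 (2k-2)^2 \gamma^2 n \leq \gamma n/32$ by the hypothesis $\gamma \leq 1/(64k^2)$; Markov then yields at most $\gamma n/16$ conflicts with positive probability. Fix an $\mathcal{F}$ satisfying all three bounds and delete one tuple from each conflict. The resulting $\mathcal{A} \subseteq \mathcal{G}$ is pairwise vertex-disjoint, has size $|\mathcal{A}| \leq 2\gamma n$, and satisfies
\[
|\mathcal{A} \cap A_v| \;\geq\; \tfrac{5}{16}\gamma n - \tfrac{\gamma n}{16} \;=\; \tfrac{\gamma n}{4}
\]
for every $v \in V$, as required.

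\textbf{Main obstacle.} The only delicate point is checking that the density constant produced in Step~1 ($\tfrac58$ in my bookkeeping) comfortably exceeds the loss from conflict deletion in Step~2. Both figures are controlled by the $8/11$ of Lemma~\ref{randomgood} together with the hypothesis $\gamma \leq 1/(64k^2)$, which is precisely what keeps the conflict expectation small enough; everything else is a routine application of Chernoff plus Markov.
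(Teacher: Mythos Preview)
Your proof is correct and follows essentially the same approach as the paper: lower-bound the number of absorbing tuples per vertex via Lemma~\ref{randomgood} and the minimum-degree condition, then random-sample with probability $\gamma/n^{2k-3}$, apply Chernoff for the size and per-vertex counts, Markov for the overlaps, and clean up. The only cosmetic differences are that you sample directly from the set of good-path tuples (rather than the slightly larger set $\cA'$ of tuples with good ends) and track marginally different constants, neither of which changes the argument.
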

\begin{proof}
By Lemma~\ref{randomgood}, we know that there are at least $\frac{8}{11}n^{2k-2}$ tuples $\vect{x_{2k-2}} \in V^{2k-2}$, such that
the $x_i$s are pairwise distinct and both $ \left(x_1, \ldots, x_{k-1} \right)$ and $ \left(x_{2k-2}, \ldots, x_{k} \right)$ are good. We denote the set of such tuples by $\cA'$.

Let $v$ be a vertex from $V$ and denote by $\cA_v$ the set of tuples $\vect{x_{2k-2}}$ from $\cA'$ such that, in addition,
\begin{itemize}
\item $ \left\{x_j, \ldots, x_{j+k-1} \right\} \in E(H)$, $1 \leq j \leq k-1$, and
\item $ \left\{v, x_j, \ldots, x_{j+k-2} \right\} \in E(H)$, $1 \leq j \leq k$.
\end{itemize}
Therefore, the set $\cA_v$ consists of those tuples that can absorb the vertex $v$.
From the minimum degree condition on $H$, see~\eqref{eq:mindegree}, it follows that
\begin{equation}\label{eq:cAv}
\left|\cA_v\right|\ge \frac{8}{11}n^{2k-2}- 2k\eps n^{2k-2}\ge \frac{7}{11}n^{2k-2}.
\end{equation}

Fix $\gamma$ with $0 < \gamma \leq \frac{1}{64k^2}$. Let $\cA$ be the set obtained by choosing each $ \left(2k-2 \right)$-tuple $\vect{x_{2k-2}} \in V^{2k-2}$ from $\cA'$  independently with probability $\frac{\gamma}{n^{2k-3}}$.

The expected size of $| \cA |$ is at most $\gamma n$ and we apply Chernoff's inequality (e.g.\ see \cite{AlonSpenc}):
\begin{equation}
\Pr \left[ |\cA|-\gamma n >  \gamma n \right] < e^{-\gamma n}. \label{Che1}
\end{equation}
This way, with high probability we obtain at most $2 \gamma n$ many $ \left(2k-2 \right)$-tuples.

Let $Y$ be the random variable taking the value $1$ whenever a pair of tuples in $\cA$ is not vertex-disjoint and the value $0$ else. Thus,
\begin{equation}
\mathbb{E}[Y] \leq (2k-2)^2n^{4k-5}\frac{\gamma^2}{n^{4k-6}} \leq 4 k^2 \gamma^2 n. \label{ExMar}
\end{equation}
Applying Markov's inequality, we obtain:
\begin{equation}
\Pr \left[Y > 8 k^2 \gamma^2 n\right] < \frac{1}{2}.\label{Mar}
\end{equation}

From~\eqref{eq:cAv} we infer by Chernoff's inequality that
\begin{equation}
\Pr \left[ |\cA_v\cap \cA| <  \frac{\gamma n}{2} \right] < e^{- \frac{1}{100} \gamma n} \label{Che2}
\end{equation}
since $\EE[|\mathcal{A'}_v\cap \cA|]\ge \frac{7}{11}\gamma n$.

With~\eqref{eq:cAv}, \eqref{Mar} and \eqref{Che2}, we see that if we delete from $\cA$ those pairs of tuples that have vertices in common, we obtain
with probability at least
\[
1/2- e^{-\gamma n}- n e^{- \frac{1}{100} \gamma n}>1/4
\]
 a new set satisfying the conditions of the lemma (note that $\frac{\gamma n}{2}-8k^2\gamma^2n\ge \frac{\gamma n}{4}$).
\end{proof}

The following lemma provides us with the essential tool to close the cycle.

\begin{lemma}
\label{con}
For all $\beta$, $0 < \beta \leq \frac{1}{64k^2}$, there exists a set $\mathcal{C}$ of size at most $2 \beta n$ consisting of pairwise disjoint $\left( k-1 \right)$-tuples, such that for each pair of good $(k-1)$-tuples there exist at least $\frac{\beta n}{4}$ elements in $\mathcal{C}$ that connect this pair.
\end{lemma}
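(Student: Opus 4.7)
The plan is to mirror the probabilistic construction of Lemma~\ref{abs}, replacing the absorbing property with the connecting property. I would first set $\cC'$ to be the family of tuples $\vecsz\in V^{k-1}$ with pairwise distinct entries, and, for each pair of disjoint good $(k-1)$-tuples $(\vecsx,\vecsy)$, let $\cC_{\vecsx,\vecsy}\subseteq \cC'$ consist of those $\vecsz$ that also avoid $\vecsx\cup\vecsy$ and make $(x_{k-1},\ldots,x_1)\Diamond \vecsz\Diamond \vecsy$ into a tight path. The whole proof reduces to the key estimate $|\cC_{\vecsx,\vecsy}|\geq \tfrac12 n^{k-1}$.

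To prove this estimate, I would count the tuples in $V^{k-1}$ that \emph{fail} to be connectors, by a union bound over the $2(k-1)$ required path edges. For $j=1,\ldots,k-1$, the $j$-th left edge is $\{x_1,\ldots,x_{k-j},z_1,\ldots,z_j\}$: by the goodness of $\vecsx$, the prefix $\{x_1,\ldots,x_{k-j}\}$ lies in at most $\rho^j\binom{n-k+j}{j}$ non-edges of $H$, and each such non-edge contributes at most $j!\,n^{k-1-j}$ bad tuples $\vecsz$ (order the $j$ completion vertices into $(z_1,\ldots,z_j)$ and choose $(z_{j+1},\ldots,z_{k-1})$ freely), for a total of at most $\rho^j n^{k-1}$ bad tuples per left edge. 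A symmetric estimate applied to $\vecsy$ (the right edge $\{z_j,\ldots,z_{k-1},y_1,\ldots,y_j\}$ contains the prefix $\{y_1,\ldots,y_j\}$, controlled by $\rho^{k-j}$ with the indexing reversed) gives the same bound for each of the $k-1$ right edges. Adding $O(k^2 n^{k-2})$ tuples with vertex collisions yields at most $\sum_{j=1}^{k-1}2\rho^j n^{k-1}+O(k^2 n^{k-2})$ bad tuples, which is much less than $\tfrac12 n^{k-1}$ since $\rho=1/(1280k^3)$.

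Once the estimate is in place, the rest mirrors Lemma~\ref{abs}: include each $\vecsz\in \cC'$ in $\cC$ independently with probability $\beta/n^{k-2}$, then use Chernoff's inequality to obtain $|\cC|\leq 2\beta n$ and $|\cC_{\vecsx,\vecsy}\cap \cC|\geq \beta n/3$, each with failure probability $e^{-\Omega(\beta n)}$, and union-bound over the at most $n^{2(k-1)}$ good pairs, which is harmless since $\beta n\gg \log n$. Markov's inequality on the expected number $(k-1)^2\beta^2 n$ of unordered pairs in $\cC$ sharing a vertex shows that this number is at most $\beta n/16$ with probability at least $1/2$; deleting one tuple from each such intersecting pair produces a pairwise disjoint family of size at most $2\beta n$ with $|\cC_{\vecsx,\vecsy}\cap \cC|\geq \beta n/3-\beta n/16\geq \beta n/4$ for every good pair. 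The main obstacle I anticipate is the counting estimate above: the middle edges of the connecting path mix several unknown $z_i$'s with vertices from both $\vecsx$ and $\vecsy$, and \emph{a priori} the goodness condition only furnishes prefix-degree bounds; the saving grace is that each of the $2(k-1)$ required edges is built around a prefix $\{x_1,\ldots,x_{k-j}\}$ of $\vecsx$ or $\{y_1,\ldots,y_{k-j}\}$ of $\vecsy$, so the $k-1$ prefix bounds of goodness on each side match the $k-1$ edges on that side exactly.
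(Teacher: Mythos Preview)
Your proposal is correct and follows essentially the same approach as the paper's proof: both establish the key lower bound $|\cC_{\vecsx,\vecsy}|\ge (1-2\sum_{i=1}^{k-1}\rho^i)n^{k-1}-o(n^{k-1})$ by a union bound over the $2(k-1)$ path edges using the prefix-degree conditions of goodness, then sample each candidate tuple with probability $\beta/n^{k-2}$ and apply Chernoff and Markov exactly as in Lemma~\ref{abs}. The only differences are cosmetic constants (the paper records $|\cC_{\vecsx,\vecsy}\cap\cC|\ge \beta n/2$ and deletes at most $4k^2\beta^2 n\le \beta n/4$ tuples, whereas you use $\beta n/3$ and $\beta n/16$), and your worry about ``middle edges'' is unfounded: with only $k-1$ connector vertices, no edge of the path can contain vertices from both $\vecsx$ and $\vecsy$, so each of the $2(k-1)$ edges is governed by a prefix of exactly one of the two good tuples, as you in fact conclude.
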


\begin{proof}
For two good vertex-disjoint tuples $\vect{x_{k-1}}, \vect{y_{k-1}}$ in $H$, let $\cC_{\vecsx,\vecsy}$ be the set of all connectors that connect $\vecsx$ with $\vecsy$ and are vertex-disjoint from $\vecsx,\vecsy$. Recall that the following conditions hold for $z\in\cC_{\vecsx,\vecsy}$:
\begin{itemize}
\item $\{x_{k-i}, \ldots, x_1, z_1, \ldots, z_i\} \in E(H)$ for $ i \in \{ 1, \ldots, k-1 \}$, and
\item $\{z_{i-k+1}, \ldots, z_{k-1}, y_1, \ldots, y_{i-k+1}\} \in E(H)$ for $ i \in \{k, \ldots, 2k-2\}$.
\end{itemize}
From the condition~\eqref{eq:good_tupl_cond}, the definition of good tuples, and from~\eqref{eq:mindegree}, the minimum degree of $H$,
we infer
\[
|\cC_{\vecsx,\vecsy}|\ge \left(1-2\sum_{i=1}^{k-1} \rho^i\right)n^{k-1}-o(n^{k-1})\ge (1-4\rho k)n^{k-1}.
\]

Now, take  $\beta$ as asserted by the lemma and let $\cC':=\bigcup \cC_{\vecsx,\vecsy}$, where the union is over all vertex-disjoint good $(k-1)$-tuples $\vect{x_{k-1}}$ and $\vect{y_{k-1}}$. Define $\cC$ to be the set obtained by choosing each $\vect{z_{k-1}} \in \cC'$ independently with probability $\frac{\beta}{n^{k-2}}$.

Similarly to \eqref{Che1}, by
Chernoff's inequality:
\begin{equation*}
\Pr \left[ |\cC|-\beta n >  \beta n \right] < e^{-\beta n}.
\end{equation*}
With probability at least $\frac{1}{2}$ at most $4 k^2 \beta^2 n\le \beta n/4$ of the $(k-1)$-tuples have to be removed from $\cC$ to obtain a set of vertex-disjoint tuples, analogously to~\eqref{Mar}.

Analogously to \eqref{Che2}, for two good vertex-disjoint tuples $\vect{x_{k-1}}, \vect{y_{k-1}}$ in $H$,
\[ \Pr \left[ |\cC_{\vect{x_{k-1}},\vect{y_{k-1}}}\cap\cC| < \frac{\beta n}{2} \right] < e^{- \frac{1}{16} \gamma n}.\]

Therefore, we deduce with positive probability that after removing from $\cC$ all tuples that are not vertex-disjoint, we are left with a set that satisfies the conditions in the lemma.
\end{proof}

The next lemma helps us to connect a linear amount of small paths into a single path avoiding a small forbidden vertex subset.
\begin{lemma}
\label{onepath}
For any set $\mathcal{X}$ of vertex-disjoint $(2k-2)$-tuples that each induce a good path in $H$, $ |\mathcal{X}| \leq \frac{1}{4 k^2} n$, and any forbidden set $F \subset V$ of size at most $\frac{1}{8 k}n$, there exists a path $P$ containing all tuples of $\mathcal{X}$, respecting their individual ordering, such that $\left( V(P)\backslash X \right) \cap F= \emptyset$.
\end{lemma}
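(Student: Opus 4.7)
The plan is to build $P$ greedily, merging the tuples of $\mathcal{X}$ one at a time by inserting a connector (in the sense of Lemma~\ref{con}) between consecutive tuples, while carefully avoiding $F$ and all previously used vertices. Order $\mathcal{X}$ arbitrarily as $T_1,\ldots,T_m$ with $m\le n/(4k^2)$, and label the vertices of $T_i$ by $u^i_1,\ldots,u^i_{2k-2}$ in path order. Since each $T_i$ induces a good path, its first end $(u^i_1,\ldots,u^i_{k-1})$ and its last end $(u^i_{2k-2},\ldots,u^i_k)$ are good $(k-1)$-tuples.

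Inductively on $i$, I would pick a $(k-1)$-tuple $\vect{z^i}$ that connects $(u^i_{2k-2},\ldots,u^i_k)$ with $(u^{i+1}_1,\ldots,u^{i+1}_{k-1})$, that is, such that $(u^i_k,\ldots,u^i_{2k-2}) \Diamond \vect{z^i} \Diamond (u^{i+1}_1,\ldots,u^{i+1}_{k-1})$ induces a tight path in $H$; furthermore $\vect{z^i}$ must avoid $F$, all vertices of $\bigcup_j T_j$, and the previously chosen $\vect{z^1},\ldots,\vect{z^{i-1}}$. Concatenating $T_1,\vect{z^1},T_2,\vect{z^2},\ldots,\vect{z^{m-1}},T_m$ then yields the desired tight path $P$, and the only vertices of $V(P)\setminus X$ lie in the $\vect{z^i}$'s, which by construction avoid $F$.

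The counting that justifies the inductive step is the crux. At step $i$ the set of vertices to avoid has size at most
\[
|F| + (2k-2)m + (k-1)(i-1) \;\le\; \frac{n}{8k} + \frac{n}{2k} + \frac{n}{4k} \;\le\; \frac{7n}{8k}.
\]
As in the proof of Lemma~\ref{con}, the number of connectors of a good vertex-disjoint pair $(\vect{a},\vect{b})$ that are themselves vertex-disjoint from $\vect{a}\cup\vect{b}$ is at least $(1-4\rho k)n^{k-1}$, while the number of $(k-1)$-tuples meeting the forbidden set is at most $(k-1)\cdot\frac{7n}{8k}\cdot n^{k-2}\le\frac{7}{8}n^{k-1}$. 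Since $\rho=(22\eps)^{1/(k-1)}=1/(1280k^3)$, we have $4\rho k<1/8$, so strictly more than $(1/8-4\rho k)n^{k-1}>0$ legal connectors survive and the induction goes through.

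The main obstacle is precisely this tight balance: the hypotheses $|\mathcal{X}|\le n/(4k^2)$ and $|F|\le n/(8k)$ are calibrated so that the cumulative forbidden mass stays at most $7n/(8k)$, which when combined with the $(1-4\rho k)$-density of valid connectors coming from Lemma~\ref{con} leaves just enough room. The smallness of $\rho$ that makes this quantitative margin positive is exactly the one fixed at the start of the section via the choice of $\eps$.
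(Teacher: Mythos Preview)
Your proof is correct and follows essentially the same greedy strategy as the paper: order the tuples arbitrarily and, at each step, insert a $(k-1)$-tuple connector between consecutive good ends while avoiding $F$, $X$, and the previously used connectors. The paper phrases the existence of a valid connector probabilistically (choosing $\vect{z_{k-1}}$ u.a.r.\ and showing $\Pr[E^1\cap E^2]\ge 1-4\rho>7/8$), whereas you phrase it as a direct count and borrow the bound $(1-4\rho k)n^{k-1}$ from the proof of Lemma~\ref{con}; the arithmetic $7n/(8k)$ for the forbidden mass and the final margin $1/8-4\rho k>0$ match the paper's computation exactly.
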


\begin{proof}
For arbitrary $\vect{x_{2k-2}}, \vect{y_{2k-2}} \in \mathcal{X}$, we choose a $\vect{z_{k-1}} \in V^{k-1}$ uniformly at random and define the events
\[ E^1 = \{ \vect{x_{2k-2}} \Diamond \vect{z_{k-1}} \text{ induces a path, respecting the ordering} \} \]
and
\[ E^2 = \{ \vect{z_{k-1}} \Diamond \vect{y_{2k-2}} \text{ induces a path, respecting the ordering} \}. \]
With $E^2_i$ being the event that $ \left\{z_{i}, \ldots, z_{k-1}, y_1, \ldots, y_{i} \right\} \in E$, $i \in \{ 1, \ldots, k-1\}$, we obtain
that
\[\Pr \left[E^2_i \right] \geq 1 - \rho^{k-i} - o(1),\]
since $\vect{y_{2k-2}}$ induces a good path, and the probability that at least two of the $k$ vertices coincide is $o(1)$.
Therefore,
\begin{multline*}
\Pr \left[E^2 \right]  \geq  1 - \sum^{k-1}_{i=1}{(1-\Pr \left[E^2_i \right])}
 \geq  1 - \sum^{k-1}_{i=1}{\left( \rho^{k-i} + o(1) \right)}
 \geq  1 - 2 \rho.
\end{multline*}
The same holds for $E^1$. Hence, by the union bound
\[\Pr \left[ E^1 \cap E^2 \right] \geq 1 - 4 \rho.\]
We choose an arbitrary ordering of $\mathcal{X}$. Iteratively, we consider two consecutive elements $\vect{x_{2k-2}}, \vect{y_{2k-2}}$ of $\mathcal{X}$.
The probability that a u.a.r.\ chosen $\vect{z_{k-1}} \in V^{k-1}$ connects $\vect{x_{2k-2}}$ with $\vect{y_{2k-2}}$
(meaning that both $E^1$ and $E^2$ hold) is at least $1 - 4 \rho$
and the probability that it is not vertex-disjoint to an already chosen element (connecting previous pairs of elements of $\mathcal{X}$), to $X$ or to $F$
is at most
\[ \left (\frac{k}{4 k^2} + \frac{2k}{4 k^2} + \frac{1}{8k} \right)k = \frac{7}{8} < 1 - 4 \rho\]
by the union bound.

Thus, we choose a  $\vect{z_{k-1}} \in V^{k-1}$ satisfying the conditions in the lemma, and iterate.
\end{proof}

The next lemma helps us find an almost spanning path in the hypergraph $H$.
\begin{lemma}
\label{extend}
For every good path $P$ and every set $F \subset V$ of size at most $k \rho n$, there exists a good path $P'$ that contains $P$ and covers all vertices except those from $F$ and at most $k \rho n$ further vertices.
\end{lemma}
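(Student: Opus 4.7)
The plan is to grow the good path $P$ one vertex at a time at, say, its right end, maintaining goodness of the path throughout, and to continue until few enough vertices outside $F$ remain uncovered. Let $P^{(0)}:=P$, and suppose that we have already produced a good path $P^{(j)}\supseteq P$ whose reversed right end is $(a_1,\ldots,a_{k-1})$. To pass to $P^{(j+1)}$ we need to select a vertex $v$ such that
(i) $v\notin V(P^{(j)})\cup F$,
(ii) $\{v,a_1,\ldots,a_{k-1}\}\in E(H)$, and
(iii) the new reversed end $(v,a_1,\ldots,a_{k-2})$ is again good, meaning
\[\deg(v,a_1,\ldots,a_{i-1})\ge\left(1-\rho^{k-i}\right)\binom{n-i}{k-i}\qquad\text{for every }i\in\{1,\ldots,k-1\}.\]

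The core step is a union bound on the number of $v$ violating at least one of (i)--(iii). Condition (i) forbids at most $|V(P^{(j)})|+|F|$ vertices; condition (ii) forbids at most $\rho n+k$ vertices, since $(a_1,\ldots,a_{k-1})$ being good gives $\deg(a_1,\ldots,a_{k-1})\ge(1-\rho)(n-k+1)$. Condition (iii) at level $i=1$ is automatic: the hypothesis $\delta_1(H)\ge(1-\eps)\binom{n-1}{k-1}$ together with $\eps=\rho^{k-1}/22<\rho^{k-1}$ forces $\deg(v)\ge(1-\rho^{k-1})\binom{n-1}{k-1}$ for \emph{every} $v$. For each $2\le i\le k-1$ one uses that $(a_1,\ldots,a_{i-1})$ is a good prefix of the good end, so the number of non-edges of $H$ extending it is at most $\rho^{k-i+1}\binom{n-i+1}{k-i+1}$, while every such non-edge accommodates at most $k-i+1$ choices of $v$; a double count then bounds the number of bad $v$ at level $i$ by $\rho(n-i+1)\le\rho n$. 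Summing, the total number of forbidden $v$ is at most $|V(P^{(j)})|+|F|+(k-1)\rho n+k$.

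As long as this quantity is strictly smaller than $n$, i.e.\ $|V(P^{(j)})|<n-|F|-(k-1)\rho n-k$, an extension exists and the iteration continues. The process therefore terminates only once $|V(P^{(j)})|\ge n-|F|-(k-1)\rho n-k$; at that moment the number of uncovered vertices is at most $|F|+(k-1)\rho n+k$, and using that $n$ is large enough so that $k\le\rho n$, the number of uncovered vertices outside $F$ is at most $(k-1)\rho n+k\le k\rho n$. The resulting $P':=P^{(j)}$ contains $P$ and is good by construction.

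The main obstacle is the level-$i$ case of (iii) for $i\ge 2$: one has to combine the degree condition inherited by the good prefix $(a_1,\ldots,a_{i-1})$ with a careful double count against non-edges in order to absorb the loss into a single $\rho n$ term per level, so that the cumulative loss from all levels is still linear in $\rho n$. The level $i=1$ case is saved precisely by the numerical gap $\eps<\rho^{k-1}$ built into the choices $\eps=1/(22(1280k^3)^{k-1})$ and $\rho=(22\eps)^{1/(k-1)}$, which renders the level-1 bound automatic from the global minimum degree condition.
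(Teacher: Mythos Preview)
Your proof is correct and follows essentially the same approach as the paper's: both greedily extend the good path at one end and show, for each level $i$ of the goodness condition, that at most $\rho n$ vertices $v$ fail at that level, then take a union bound over the $k$ levels (including the edge condition). The only cosmetic difference is that the paper phrases the level-$i$ bound as an averaging argument over codegrees $\deg(v,x_1,\ldots,x_i)$ leading to a contradiction with~\eqref{eq:good_tupl_cond}, whereas you phrase the same computation as a double count of incidences between bad $v$'s and non-edges extending $\{a_1,\ldots,a_{i-1}\}$; these are dual formulations of the identical estimate.
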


\begin{proof}
Consider the longest good path $P'$ that contains $P$ and suppose that $|V(P')~\cup~F|~<~n~-~k\rho n$. Then choose one end $\vecsx$ of $P'$. 
Note that from~\eqref{eq:good_tupl_cond}, 
i.e.\, from the condition $ \deg \left(x_1, \ldots, x_{i} \right) \geq \left(1 - \rho^{k-i} \right) \binom{n-i}{k-i}$ for every $i$, 
it follows that, for every $i$, the number of vertices $v\in V(H)$ such that
\begin{equation}\label{eq:extend}
\deg(v,x_1,\ldots,x_i)\ge (1-\rho^{k-i-1})\binom{n-i-1}{k-i-1}
\end{equation}
is at least $n - \rho n$, implying that  $|V(P') \cup F| \geq n- k\rho n$.

Indeed, suppose for contradiction that there exists an $i$, such that the number of $v$s satisfying~\eqref{eq:extend} is less than $n-\rho n$. Then,
\begin{align*}
\deg(x_1,\ldots,x_i) & = \sum_{v\in V\setminus\{x_1,\ldots,x_i\}} \frac{1}{k-i}\deg(v,x_1,\ldots,x_i) \\
& < \frac{(n-\rho n)}{k-i}\binom{n-i-1}{k-i-1}+\frac{\rho n-i}{k-i} (1-\rho^{k-i-1}) \binom{n-i-1}{k-i-1} \\
& \leq \left(1 - \rho^{k-i} \right) \binom{n-i}{k-i},
\end{align*}
contradicting~\eqref{eq:good_tupl_cond}.
\end{proof}

\section{Proofs of Theorem~\ref{l-tight} and Theorem~\ref{delta1} }

\begin{proof}[Proof of Theorem~\ref{l-tight}]
Suppose $H= \left(V,E \right)$ is a $k$-graph on $n$ vertices, $n$ sufficiently large, with at least
\[\binom{n-1}{k} + \ex \left(n-1, P(k,l) \right)\]
edges and no vertex with a $P(k,l)$-free link.
Then the vertex set can be partitioned into two sets $V = V' \cup V"$ with $|V'|=n'$
and $V'' = \{v_1, \ldots, v_t\}$
such that
\begin{equation}\label{eq:delta_cond}
\delta_1 \left(H' \right)\geq \left(1 - \eps \right)\binom{n'}{k-1}
\end{equation}
 with $H' = H[V']$ and $\eps = \frac{1}{22(1280k^3)^{k-1}}$. To obtain $V'$, we iteratively delete vertices $v_1, \ldots, v_t$ of minimum degree from $H$ till the $\delta_1$-condition~\eqref{eq:delta_cond} holds. Counting the non-edges one observes that $t \leq \frac{2}{\eps}$.

The following claim provides an embedding of the vertices of $V''$.

\begin{claim}
\label{match}
There exists a set $\mathcal{S}$ of $t$ paths of type $\hpath{k}{k-1}{2k-2}$, if $t \geq 2$, or of type
$P(k,l)$,
if $t=1$, such that $v_i$ is in each edge of the $i^{th}$ element of  $\mathcal{S}$, $1 \leq i \leq t$, for some ordering of $\mathcal{S}$.
\end{claim}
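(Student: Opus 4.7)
The plan is to handle $t = 1$ and $t \ge 2$ separately. For $t = 1$, the approach is direct: by hypothesis, the link $H(v_1)$ is not $P(k,l)$-free, so it contains a copy of $P(k,l)$, and adjoining $v_1$ to each of its $\lfloor k/(k-l)\rfloor$ edges produces a path of the desired $P(k,l)$-type in which $v_1$ lies in every edge; this single structure will be $\mathcal{S}$.

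For $t \ge 2$, my plan is to build the $t$ vertex-disjoint tight paths greedily, one for each $v \in V''$. The idea is that adjoining $v$ to the edges of a tight $(k-1)$-uniform path $\hpath{k-1}{k-2}{2k-3}$ inside the link $H(v)$ produces exactly the required tight path $\hpath{k}{k-1}{2k-2}$ with $v$ in every edge, so the task reduces to locating such a $(k-1)$-uniform tight path in each link, disjoint from vertices used by previously chosen paths. The first key step will be a uniform degree lower bound $\deg_H(v) = \Omega(\varepsilon n^{k-1})$ for every $v \in V''$. Since $v_1$ has minimum degree in $H$, it suffices to lower-bound $\deg_H(v_1)$; and because $t \ge 2$, the removal process did not stop after step~1, so there is a vertex of degree less than $(1-\varepsilon)\binom{n-2}{k-1}$ in $H - v_1$, contributing at least $\varepsilon\binom{n-2}{k-1} - O(n^{k-2})$ non-edges of $H$ that avoid $v_1$. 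Comparing with the total non-edge budget of $H$, at most $\binom{n-1}{k-1} - \ex(n-1, P(k,l))$, forces $\deg_H(v_1) \ge \varepsilon\binom{n-2}{k-1} - O(n^{k-2})$, and hence $\deg_H(v) \ge \deg_H(v_1)$ yields the same bound for every $v \in V''$.

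The second key step is to exploit this density: the link $H(v)$ has $\Omega(\varepsilon n^{k-1})$ edges, vastly more than $\ex(n, \hpath{k-1}{k-2}{2k-3}) = O(n^{k-2})$ (which follows, since $\hpath{k-1}{k-2}{2k-3}$ is a subgraph of $\hpath{k-1}{k-2}{2k-2}$, from the Gy\H{o}ri-Katona-Lemons-type bound on $\ex(n, \hpath{k-1}{k-2}{2k-2})$ cited in the introduction). Even after forbidding the $O(k/\varepsilon)$ vertices used in previous paths or in $V''\setminus\{v\}$, only $O(n^{k-2})$ edges of $H(v)$ are lost, so the restricted link still has $\Omega(\varepsilon n^{k-1})$ edges, well above the extremal threshold, and still contains the desired tight $(k-1)$-path on $2k-3$ vertices. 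Iterating over $V''$ produces $t$ pairwise vertex-disjoint tight $k$-paths, and matching them to the $v_i$'s gives the required ordering of $\mathcal{S}$. I expect the main obstacle to be establishing the uniform degree lower bound---this is the essential use of the hypothesis $t \ge 2$---after which the greedy construction and vertex-disjointness follow by routine extremal-graph reasoning.
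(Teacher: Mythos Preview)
Your argument is correct for the claim as literally stated, and your $t\ge 2$ case follows the same density-versus-extremal-number scheme as the paper's high-degree case. However, you miss the point that the paper's proof actually establishes and that the surrounding argument needs: the paths in $\mathcal{S}$ must be \emph{good}, i.e., have ends that are good $(k-1)$-tuples in $H'$. The word does not appear in the statement of the claim, but it is in the proof outline (Step~1) and is essential later: in Step~7 the paths of $\mathcal{S}$ are spliced into the cycle via connectors from $\mathcal{C}$, and Lemma~\ref{con} only connects good tuples. Your paths come with no control over their ends.

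For $t\ge 2$ this is easily repaired: only $o(n^{k-1})$ of the $(k-1)$-tuples of $H'$ fail to be good, so one may additionally discard all link-edges containing a non-good subset, and the $\Omega(\eps n^{k-1})$ degree budget still dominates the extremal number of the tight link-path. This is precisely what the paper does in its Case~2.

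The genuine gap is your $t=1$ case. You only invoke the hypothesis that $H(v_1)$ contains \emph{some} $P(k,l)$. But $t=1$ allows $\deg(v_1)$ to be as small as $\ex(n-1,P(k,l))+1=O(n^{k-2})$, and at that density there is no room to argue that a \emph{good} copy exists. The paper (its Case~1, triggered by $\deg(v_1)<\tfrac{\eps}{2}\binom{n-1}{k-1}$) handles this with a double count: the number $a$ of bad link-edges --- those containing a $j$-set of low codegree in $H'$ --- satisfies $a\le c$, where $c$ is the number of non-edges of $H'$; meanwhile the global edge hypothesis forces $\deg(v_1)\ge c+\ex(n-1,P(k,l))$. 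Deleting the bad link-edges therefore still leaves more than $\ex(n-1,P(k,l))$ edges, yielding a $P(k,l)$ all of whose ends are good. This trade-off between bad link-edges and non-edges of $H'$ is the main technical content of the claim, and it is entirely absent from your proposal.

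A minor structural difference: the paper splits on the size of $\deg(v_1)$ rather than on $t$. Small degree forces $t=1$, but not conversely; when $t=1$ with large degree the paper's Case~2 already supplies a good tight path via the density argument.
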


We apply Lemma~\ref{abs} for $\gamma = \frac{1}{64 k^2}$ and Lemma~\ref{con} for $\beta = \frac{1}{ 1280 k^3}$ to $H'$.
As we want disjoint sets $A$, $C$, and $S$, 
we delete all elements from $\mathcal{A}$ that are not vertex-disjoint to an element from $\mathcal{C}\cup \mathcal{S}$. 
Thus, we delete at most $2k \beta n' + \frac{4k}{\eps} \leq \frac{\gamma n'}{20}$ absorbers overall, 
and  for every vertex $v \in V'$ there are at least $\frac{\gamma n'}{5}$ elements in the new set $\mathcal{A}$ absorbing $v$. 
Similarly, we make $C$ disjoint from $S$ still keeping at least $\frac{\beta n'}{5}$ connectors in $\mathcal{C}$ for each pair of good $(k-1)$-tuples.

Applying Lemma~\ref{onepath} on the new set $\mathcal{A}$, we obtain a good tight path in $H'$ containing all elements of $\mathcal{A}$ and no vertex from $C \cup S$.
We extend this path to one good path with Lemma~\ref{extend} such that it covers all but $k \rho n'$ vertices from $V'\backslash (C \cup S)$ and does not contain any vertex from $C \cup S$.

As a next step, we use connectors from $\mathcal{C}$ to connect the elements of $\mathcal{S}$ and the extended path to one cycle. This cycle absorbs the remaining vertices including the unused connectors, since $2 \beta n' + k \rho n' < \frac{\gamma n'}{5}$.
If $\mathcal{S}$ contains only one element, we obtain a Hamiltonian cycle that is tight except for the $l$-tight path from $\mathcal{S}$.
Otherwise, we obtain a tight Hamiltonian cycle.
Hence, there exists an $l$-tight Hamiltonian cycle.
\end{proof}
Note that we actually prove the bound for Hamiltonian cycles that are tight except in the link of at most one vertex.

Now deliver the missing proof of the above claim.
\begin{proof}[Proof of the Claim~\ref{match}]
We consider two cases.
\setcounter{caseq}{0}
\begin{case}[$\deg \left(v_1 \right) < \frac{\eps}{2} \binom{n-1}{k-1}$]
 In this case, the number of
missing edges yields a sufficient minimum degree in $H - v_1$, hence $t=1$.

Let $a$ be the number of hyperedges $\{x_1, \ldots, x_{k-1}\}$ in $H(v_1)$ that contain a subset $\{x_1, \ldots, x_j\}$, $j \in \{1, \ldots, k-1\}$,
satisfying
$\deg_{H'}(x_1, \ldots, x_j) < (1 - \rho ^{k-j}) \binom{n-1-j}{k-j},$
i.e.\, if there is a tuple $(x_1, \ldots, x_{k-1})$ obtained by an ordering of the edge that is not good in $H'$.
We denote the number of such $j$-sets by $b_j$ and observe that each of them lies in at most $\binom{n-1-j}{k-1-j}$ edges in $H(v_1)$.
Hence, we obtain
\[a\leq \sum_{j=1}^{k-1} \binom{n-1-j}{k-1-j} b_j.\]
We further call those $a$ edges \emph{bad}.

The second time we apply double counting, we set $c$ to be the number of non-edges in $H'$.
By definition, each of the $b_j$ $j$-sets lies in at least $\rho^{k-j} \binom{n-1-j}{k-j}$ non-edges of $H'$.
Note that each non-edge of $H'$ has exactly $2^k$ subsets. Henceforth,
\[\sum_{j=1}^{k-1}\rho^{k-j} \binom{n-1-j}{k-j} b_j \leq 2^kc.\]

Combining the two bounds, there are at most
\begin{align*}
a & \leq \sum_{j=1}^{k-1} \binom{n-1-j}{k-1-j} b_j = \sum_{j=1}^{k-1}\rho^{k-j} \binom{n-1-j}{k-j} b_j \frac{k-j}{n-k}\rho^{j-k} \\
& \leq \frac{k-1}{n-k} \rho^{1-k} \sum_{j=1}^{k-1} \rho^{k-j} \binom{n-1-j}{k-j} b_j \\
& \leq \frac{k-1}{n-k} \rho^{1-k} 2^k c \leq c
\end{align*}
edges in $H(v_1)$, which have an ordering producing a tuple that is not good in $H'$.
Observe that equality can only be obtained with $c=0$.

For $c=0$, there exists a $P(k,l)$ in the link of $v_1$ by assumption on $H$, and this path is good, hence, we are done. For $c>0$, we obtain $\deg(v_1) > c + \ex(n-1, P(k,l))$. We disregard bad hyperedges in $H(v_1)$ and using $a<c$, we still find a $P(k,l)$ in the link of $v_1$. The obtained path is good, proving the claim.
\end{case}

\begin{case}[$\deg \left(v_1 \right) \geq \frac{\eps}{2} \binom{n-1}{k-1} $]
In this case, we have for all $1\leq i \leq t$, $\deg \left(v_i \right) \geq \frac{1}{3} \binom{n-1}{k-1}$ holds because the $v_i$s are chosen greedily with ascending degree.
In this case, we actually show that each of the $v_i$s can be matched to a good tight path such that the assigned paths are pairwise vertex-disjoint.
Since the proportion of $k$-sets that are edges in $H'$ is $1 - o(1)$, we know that there are $o(1) \binom{n'}{k-1}$ tuples that are not good in $H'$.
By the result from~\cite{GKL} mentioned in the introduction it holds that
\[ \ex \left(n', \hpath{k-1}{k-2}{2k-2} \right) \leq (k-1) \binom{n'}{k-2} = o(1)\binom{n'}{k-1}.\]
There are at most $O(1) \binom{n-2}{k-2}$ edges including at least two vertices from $V''$.
We assign iteratively vertex-disjoint good $(2k-1)$-paths to each of the $v_i$, $1\leq i \leq t$,
 such that $v_i$ is in each of its edges.
 This is possible, since we disregard at most $o(1) \binom{n'}{k-1}$ many edges in the link of each $v_i$ that contain a tuple that is not good or a vertex contained in a previously assigned path or another vertex from $V''$.
\end{case}
\end{proof}

\begin{proof}[Proof of Theorem~\ref{delta1}]
The proof of Theorem~\ref{delta1} follows the same pattern as the proof of Theorem~\ref{l-tight} without making use of Claim~\ref{match}.
Therefore, we only give a brief sketch of it.

Suppose $H$ is a $k$-graph on $n$ vertices, $n$ sufficiently large, with $\delta_1 \geq (1-\eps)\binom{n-1}{k-1}$ and  $\eps = \frac{1}{22(1280k^3)^{k-1}}$.
Similarly to the proof of Theorem~\ref{l-tight}, we apply Lemmas~\ref{abs} and \ref{con} and obtain via deletion of elements of $\mathcal{A}$ two vertex-disjoint sets $A$ and $C$ such that $\mathcal{A}$ and $\mathcal{C}$ have the desired properties. Using Lemma~\ref{onepath} we find a good path containing all elements of $\mathcal{A}$ such that $\mathcal{C}$ is vertex-disjoint from it. We extend this path with Lemma~\ref{extend} such that it contains all but at most $\rho k n$ vertices from $V\backslash C$ and no vertex from $C$. Using a connector, we connect the ends of this path, obtaining a cycle. As $2 \beta n + k \rho n < \frac{\gamma n}{5}$ holds, we absorb the remaining vertices and obtain a tight Hamiltonian cycle.
\end{proof}

\section{Concluding Remarks}

The edge-density of extremal non-Hamiltonian hypergraphs is $1-o(1)$ (unlike the density of $F$-extremal graphs for fixed $k$-graphs $F$), since a Hamiltonian cycle is a spanning substructure. In general, we conjecture that an extremal graph of any bounded spanning structure consists of an $(n-1)$-clique and a further extremal graph.

\begin{conj}
\label{general}
For any $k \in \mathbb{N}$ there exists an $n_0$ such that for every $k$-graph $H$ on $n \geq n_0$ vertices
without a spanning subgraph isomorphic to a forbidden hypergraph $F$
of bounded maximum vertex degree,
\[|e(H)|\leq \binom{n-1}{ k} + \ex \left(n-1, \left\{F(v):v \in V \right\} \right)\]
holds, and the bound is tight.
\end{conj}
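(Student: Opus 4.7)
\medskip
\noindent\textbf{Proposed proof of Conjecture~\ref{general}.}
The lower bound is easy and mimics the constructions in the Tur\'an-type theorems: fix any $(n-1)$-vertex $(k-1)$-graph $G$ on a set $V_0$ with $\ex(n-1,\{F(v):v\in V(F)\})$ hyperedges that is simultaneously $F(v)$-free for every $v\in V(F)$, then let $H$ be the union of a clique on $V_0$ and a single additional vertex $u$ whose link is $G$. Any spanning copy of $F$ in $H$ must place some $w\in V(F)$ on $u$, and its link in $H$ would then contain $F(w)$, a contradiction. The total edge count of $H$ meets the claimed bound.

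For the upper bound, my plan is to imitate the two-stage strategy used for Theorem~\ref{l-tight}. First, iteratively delete vertices of minimum degree until the remaining induced subgraph $H'=H[V']$ satisfies
\[
\delta_1(H')\ \ge\ \bigl(1-\eps\bigr)\binom{|V'|-1}{k-1}
\]
for a small $\eps=\eps(k,\Delta,|V(F)|)$. Counting non-edges as in the proof of Theorem~\ref{l-tight} shows that the removed set $V''=\{v_1,\dots,v_t\}$ has constant size $t=O(1/\eps)$, and in fact in the extremal regime only $t\le 1$ arises. The second stage is to produce an embedding of $F$ into $H$ by embedding all but a constant-size piece of $F$ into the dense part $H'$, while routing each removed vertex $v_i$ to a carefully chosen image vertex in $V(F)$ whose link $F(w_i)$ is \emph{forced} to appear at $v_i$ by the surplus edge count.

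The heart of the argument is a Dirac-type embedding result for $k$-graphs: if $\delta_1(H')\ge (1-\eps)\binom{|V'|-1}{k-1}$, then $H'$ contains as a spanning subgraph any $k$-graph on $|V'|$ vertices of maximum vertex-degree at most~$\Delta$, and, moreover, this embedding is flexible enough to prescribe the images of any constant number of vertices (with suitable boundary conditions for their links). Granted such a ``robust hypergraph Dirac/blow-up'' tool, the endgame runs as follows. If $t=0$ we are immediately done. If $t\ge 1$, an averaging argument plus the hypothesis $e(H)>\binom{n-1}{k}+\ex(n-1,\{F(v)\})$ forces the link of at least one deleted vertex, say $v_1$, to contain some $F(w)$; an extension of the same counting (exploiting that the only deleted vertices are the $v_i$'s and that $\Delta(F)$ is bounded) lets us simultaneously reserve disjoint ``target sockets'' for $v_2,\dots,v_t$. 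Combine these anchors with the absorber/connector machinery of Lemmas~\ref{abs}--\ref{extend}, upgraded to embed a generic bounded-degree template rather than a tight cycle, to finish the embedding.

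The main obstacle is precisely the robust hypergraph embedding tool I assumed: unlike the graph case, where the Bollob\'as--Koml\'os--S\'os type results and the blow-up lemma furnish exactly this, for $k\ge 3$ the analogous universality statement for all bounded-degree spanning $F$ is not available off the shelf, and known absorption methods are tailored to particular structures (matchings, tight/loose cycles, $F$-factors). Proving that a near-complete $k$-graph contains every bounded-degree spanning subgraph, with the additional flexibility to accommodate $O(1)$ vertices of forced image and prescribed local link, is where the real work lies; any additional complication in $F$ (e.g.~many non-isomorphic links) can be absorbed into the constant $\eps$, but the embedding theorem itself is the bottleneck that keeps Conjecture~\ref{general} open in the generality stated.
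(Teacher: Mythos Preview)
This statement is a \emph{conjecture} in the paper, not a theorem; the paper does not attempt to prove it in general. The authors only remark (without proof) that it holds when $F$ has the very special shape ``a constant-size core plus vertex-disjoint paths,'' and they note that this case subsumes Theorem~\ref{l-tight}. So there is no ``paper's own proof'' to compare against.

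Your lower-bound construction is exactly the intended one and is correct. For the upper bound, your outline is an honest sketch of the natural approach, and you yourself identify the genuine gap: the ``robust hypergraph Dirac/blow-up'' step that would embed an arbitrary bounded-degree spanning $k$-graph into a near-complete host, with a constant number of prescribed anchors, is not a known theorem for $k\ge 3$. The absorber/connector machinery of Lemmas~\ref{abs}--\ref{extend} is tailored to tight paths and does not obviously ``upgrade'' to a universal bounded-degree template; that upgrade is precisely the open content of the conjecture. One further issue worth flagging: when $t\ge 2$, the surplus $e(H)-\binom{n-1}{k}>\ex(n-1,\{F(v)\})$ only directly controls the link of a \emph{single} vertex, so your claim that counting ``lets us simultaneously reserve disjoint target sockets for $v_2,\dots,v_t$'' needs a separate argument (in the paper's cycle case this is handled by the $\eps/2$ degree dichotomy in Claim~\ref{match}, which is specific to paths in links). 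In short, your write-up is a fair strategic summary but, as you concede, not a proof; the conjecture remains open.
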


It is not hard to see that Conjecture~\ref{general} holds for forbidden spanning subgraphs $F$ containing a fixed vertex set $V'\subset V$, where $|V'|$ has constant size, such that $F[V\backslash V']$ is a subgraph consisting of a vertex-disjoint union of paths.
Note that  Conjecture~\ref{general} implies the result obtained by the proof of Theorem~\ref{l-tight}.

A $2$-graph is called \emph{pancyclic}, if for any $c$ with $3 \leq c \leq n$ it contains a $c$-cycle.
Similarly to the spanning structure of Hamiltonian $l$-tight cycles,
Katona and Kierstead \cite{KK} defined $l$-tight cycles of any length.
This allows us to generalize the concept of pancyclicity by calling a $k$-graph \emph{$l$-pancyclic},
if for any $c$ with $3 \leq c \leq n/(k-l)$ it contains an $l$-tight cycle on $c$ edges.
In his famous metaconjecture \cite{Bondy}, Bondy claimed for 2-graphs that almost any non-trivial condition on a graph which implies that the graph is Hamiltonian
also implies that the graph is pancyclic. (There may be a simple family of exceptional graphs.)

It is not hard to see that both the condition in Theorem~\ref{l-tight} and the condition in Theorem~\ref{delta1}
imply not only Hamiltonicity but also pancyclicity.

\bibliographystyle{plain}
\bibliography{quellen}

\end{document}